\def\BibTeX{{\rm B\kern-.05em{\sc i\kern-.025em b}\kern-.08em
    T\kern-.1667em\lower.7ex\hbox{E}\kern-.125emX}}
\newtheorem{theorem}{\textbf{Theorem}}
\newtheorem{proposition}[theorem]{\textbf{Proposition}}
\newtheorem{corollary}[theorem]{\textbf{Corollary}}
\newtheorem{lemma}[theorem]{\textbf{Lemma}}
\newtheorem{remark}[theorem]{\textbf{Remark}}
\newcommand{\R}{\mathbb{R}}
\newcommand{\N}{\mathbb{N}}
\newcommand{\C}{\mathbb{C}}
\newcommand{\D}{\mathbb{D}}
\renewcommand{\H}{\mathcal{H}}
\definecolor{darkviolet}{rgb}{0.58,0,0.83}
\begin{document}
\title{How large are the gaps in 
phase space?}

\author{
\IEEEauthorblockN{ Michael Speckbacher}
\IEEEauthorblockA{\textit{Acoustics Research Institute},
\textit{Austrian Academy of Sciences}\\
Dominikanerbastei 16, 1010 Vienna, Austria\\
Email: michael.speckbacher@oeaw.ac.at}
}
\date{}
\maketitle

\begin{abstract}
Given a sampling measure  for the wavelet transform (resp. the short-time Fourier transform) with the wavelet (resp. window) being chosen from the family of Laguerre (resp. Hermite) functions, we provide quantitative upper bounds on the radius of any  ball that does not intersect the support of the  measure. The estimates depend on the condition number, i.e., the ratio of the sampling constants, but are independent of the structure of the measure. Our proofs are completely elementary and rely on explicit formulas for the respective transforms.
\end{abstract}

\begin{IEEEkeywords}
sampling measures, wavelet transform, short-time Fourier transform, frames.
\end{IEEEkeywords}

\section{Introduction}

This article draws inspiration from the paper \emph{'How large are the spectral gaps?'} by Iosevich and Pedersen \cite{gaps}, as well as the recent work by  Papageorgiu and van Velthoven \cite{jordy24}, who gave quantitative estimates for relative denseness
for exponential frames and coherent frames on groups of polynomial growth, respectively. The main objective of this paper is to obtain  a similar result for the wavelet transform using a specific family of orthogonal functions as wavelets.   In particular, we 
establish an upper bound on the  radius of  any ball in wavelet phase space that does not intersect the support  of a sampling measure. {Our proof follows a different approach than  }\cite{jordy24} since the $(ax+b)$-group underlying the wavelet transform exhibits exponential growth. In addition, we discuss sampling measures for the short-time Fourier transform (STFT) and provide a bound  which, in principle, could readily be obtained using the ideas of \cite{jordy24}. Given that the proof strategy closely mirrors that of the  wavelet transform and that our elementary approach provides explicit constants, we chose to include the statement in this article.

Let $(X,\nu)$ be a metric measure space and $\H$ be a closed subspace of $L^2(X,\nu)$. A measure $\mu$ on $X$ is called a \emph{sampling measure} if there are constants $A,B>0$ s.t. 
$$
A\|F\|^2_{L^2_\nu}\leq \int_{X}| F(z)|^2d\mu(z)\leq B\|F\|^2_{L^2_\nu},\quad F\in \H.
$$
As we already pointed out, we  focus   on the cases when $\H$ is  the range of the wavelet transform or of the STFT.
A measure $\mu$ is called $(\gamma,R)$\emph{-dense} if   $\mu(B_R(z))\geq\gamma>0$ for every $z\in X$, and
\emph{relatively dense} if  there exist $R>0$ and $\gamma>0$ s.t. $\mu$ is $(\gamma,R)$-dense.    

There are various qualitative and quantitative results relating  sampling measures and relative denseness. 
For one, it is well-known   that  the support of a sampling measure for the wavelet transform as well as the STFT  is    necessarily relatively dense, see, e.g., \cite{Ascensi,japero87,lue98,orce98}. If a sampling measure for the STFT is discrete, i.e., $\mu=\sum_{\lambda\in\Lambda}\delta_\lambda$  with $\Lambda\subset \R^{2d}$ discrete, then  the density theorem for Gabor frames provides a more refined  analysis  assuring that the lower Beurling density of $\Lambda$ is greater or equal than one for every window function \cite{heil-density}.
For the wavelet transform, however, no such general result is known and all existing density  statements either require a particular structure of the sampling points and\,/\,or choice of the wavelet function  \cite{advances,jl&jordy,seip-regular,seip-disk}.  See \cite{kuty} for a discussion of the difficulties that occur in this setting.

{Previous 
quantitative results on relative denseness primarily focused on   estimates of the sampling constants $A$ and $B$ in terms of $R$ and $\gamma$. This approach was studied, e.g., in \cite{abspe18-sieve,wavelet-large-sieve,dom-sets,jaspe21}  for 
$\mu=\chi_\Omega \nu$ (here  $\chi_\Omega$ denotes the characteristic function of a measurable set $\Omega$),
or  in \cite{suzho02} for    a discrete measure   $\mu$ leading to explicit frame bounds. Here, we study the converse problem of providing an upper bound on $R$ in terms of $A$ and $B$ given that $\mu$ is not $(\gamma,R)$-dense.
}

We prove our main estimates in  Theorem~\ref{thm:gaps-wavelets} for a family of  orthogonal wavelets $\psi_n^\alpha$, $n\in\N_0$, $\alpha>0,$   defined in Fourier domain in terms of the generalized Laguerre polynomials (see equation \eqref{def-psi_n}). The ranges  of the respective wavelet transforms  play   important roles in various fields as they may be identified with     Bergman spaces of analytic functions on the upper half-plane and the unit disk ($n=0$), and with  hyperbolic Landau level spaces in quantum mechanics, see \cite{abbagomo15,abdoe12,wavelet-large-sieve} for    detailed discussions of these connections.
It seems that the statement of Theorem~\ref{thm:gaps-wavelets} was previously unknown, even in the literature on Bergman spaces. The only result, that we are aware of, that  exhibits  a   conceptual similarity, is   \cite[Lemma~4.1]{sei91}.

Our  proofs  follow the general strategy developed in \cite{gaps} and  rely on explicit formulas of the wavelet transform and the STFT.   This approach allowed  us to derive elementary  bounds on the quotient of, e.g., two wavelet transforms \begin{center}$z\mapsto  {W_{\psi_n^\alpha}\psi_0^\alpha(z)}/{ W_{\psi_n^\alpha}\psi_0^\alpha(w^{-1}\cdot z)}, \quad z,w\in\C^+,\ z\neq w,$\end{center}
on certain regions in   phase space which is then partitioned  accordingly.

\section{Sampling Measures for the Wavelet Transform}\label{sec:wavelets}

\subsection{Basic Wavelet Theory}
We   use the standard notation $\H^{2}(\mathbb{C}^{+})$ for  the \emph{Hardy space } of analytic functions in $\mathbb{C}^{+}$ equipped with the norm 
\begin{equation*}
\left\Vert f\right\Vert _{\H^{2}}^2:=\text{ }\sup_{0<s<\infty }\int_{-\infty }^{\infty }\left\vert
f(x+is)\right\vert ^{2}dx<\infty \text{.}
\end{equation*}
Let $z=x+is\in  \mathbb{C}^{+}$.   The \emph{time-scale shift} $\pi(z)$ of a function $\psi \in H^{2}(\mathbb{C}^{+}) $ is defined  as
\begin{equation*}
\pi (z)\psi (t):=T_{x}D_{s}\psi (t)=s^{-\frac{1}{2}}\psi (s^{-1}(t-x)).\label{representation}
\end{equation*}
One may  identify $\C^+$ with the $(ax+b)$-group via the     multiplication
\begin{equation}\label{eq:group}
z\cdot w= x+sx^{\prime }+iss^{\prime },\quad z=x+is,\ w=x^{\prime }+is^{\prime }\in \C^+\text{,}
\end{equation}
Then the neutral element is  $i$, and the inverse
element of $z$ is $z^{-1}=- {x}/{s}+i/{s}$.  Throughout this paper "$\cdot$" will exclusively be used to  denote  the group multiplication \eqref{eq:group}. 
The \emph{wavelet transform} of $f\in \H^2(\C^+)$ with
respect to a wavelet\ $\psi $ is defined as 
\begin{equation*}
W_{\psi }f(z)=\left\langle f,\pi ({z})\psi \right\rangle . 
\end{equation*}
A wavelet $\psi$ is called \emph{admissible} if 
\begin{equation}
0<\int_{\R^+}\big| \widehat{\psi}(\xi)\big|^2\frac{d\xi}{\xi}=:C_{\psi }<\infty, \label{Adm_const}
\end{equation}
where $\widehat{\psi}$ denotes the Fourier transform of $\psi$.
For an admissible wavelet $\psi$, the wavelet transform $W_\psi:\H^2(\C^+)\to L^2(\C^+,s^{-2}dz)$ is a constant multiple of an isometry, i.e.,  
\begin{equation}
\int_{\mathbb{C}^{+}}\left\vert W_{\psi }f(z)\right\vert ^{2}s^{-2}dz=C_{\psi }\left\Vert f\right\Vert _{\H^{2} 
}^{2},  \label{isometry}
\end{equation}
where $dz$ denotes the Lebesgue measure on $\C^+$.
A family of vectors $\{\pi(\lambda)\psi\}_{\lambda\in\Lambda}\subset \H^2(\C^+)$   is called a \emph{wavelet frame} if there exist constants $A,B>0$ s.t. for every $f\in\H^2(\C^+)$
$$
A\|f\|^2_{\H^2}\leq \sum_{\lambda\in\Lambda}| W_\psi f(\lambda)|^2\leq B\|f\|^2_{\H^2}.
$$
A measure $\mu$ is called a \emph{sampling measure} for the wavelet transform $W_\psi$ if  for every $f\in\H^2(\C^+)$
$$
A\|f\|^2_{\H^2} \leq \int_{\C^+}| W_\psi f(z)|^2  d\mu(z)\leq B\|f\|^2_{\H^2}.
$$
In this terminology, a wavelet frame $\{\pi(\lambda)\psi\}_{\lambda\in\Lambda}$ corresponds to the sampling measure $\mu=\sum_{\lambda\in\Lambda}\delta_\lambda$, where $\delta_\lambda$ denotes the Kronecker delta.

\subsection{Pseudohyperbolic Metric and M{\"o}bius Transform}

\noindent The \emph{pseudohyperbolic metric} on $\mathbb{C}^{+}$ is given by 
\begin{equation*}
\rho_{\C^+} (z,w):=\left\vert \frac{z-w}{z-\overline{w}}\right\vert ,\qquad
z,w\in \mathbb{C}^{+},
\end{equation*}%
and the \emph{pseudohyperbolic disk} of radius $R>0$ centered at $z\in 
\mathbb{C}^{+}$ is denoted by $\mathcal{D}_{R}(z):=\{\omega \in \mathbb{C}%
^{+}:\ \rho_{\C^+} (z,w)<R\}$.
Note that $\rho_{\C^+} $ only takes values in the half open interval $
[0,1)$ and that $\rho_{\C^+} (z,w)=\rho_{\C^+} (z^{-1}\cdot w,i)$. 

Let $\mathbb{D}_{R}(z)\subset \C$ denote the Euclidean disk of radius $R>0$ centered at $z\in\C$. We
write for short $\mathbb{D}_R:=\mathbb{D}_R(0)$ and $\mathbb{D}:=\mathbb{D}_{1}$. The
Moebius transform $T:\mathbb{D}\rightarrow \mathbb{C}^{+}$,  
\begin{equation*}
T(u):=i\frac{1+u}{1-u},\quad u\in \mathbb{D},
\end{equation*}%
is bijective and maps the pseudohyperbolic distance in $\mathbb{C}^{+}$ to the
pseudohyperbolic distance in $\mathbb{D}$, i.e., for $u,v\in\mathbb{D}$
\begin{equation*}
\rho_{\C^+} (T(u),T(v))=\left\vert \frac{v-u}{1-\overline{u}v}\right\vert
=:\rho _{\mathbb{D}}(u,v).
\end{equation*}
 
\subsection{Quantitative Bounds for Gaps in $\C^+\backslash \emph{supp}(\mu)$}
 
  We will establish our main  result for wavelets chosen from the orthonormal basis $\{\psi
_{n}^{\alpha }\}_{n\in\N_0}\subset \H^{2}(\mathbb{C}^{+})$, $ {\alpha>0,}$ which is
defined in the Fourier domain via
\begin{equation}\label{def-psi_n}
\widehat{\psi _{n}^{\alpha }}(t):={\sqrt{ \frac{ 2^{\alpha +2}\hspace{1pt}  \pi\hspace{1pt} n!}{\Gamma
(n+\alpha +1)}}}\ t^{\frac{\alpha }{2}}e^{-t}L_{n}^{\alpha }(2t),\quad t>0,
\end{equation}%
where $L_{n}^{\alpha }$ denotes the generalized Laguerre polynomial of
degree $n$, see, e.g., \cite[Chapter~18]{NIST10}
\begin{equation*}
L_{n}^{\alpha }(t)=\sum_{k=0}^{n}\frac{(-1)^{k}}{k!}\binom{n+\alpha }{n-k}%
t^{k},\quad t>0.
\end{equation*}%
 We need an explicit formula for the inner products of time-scale shifted versions of $\psi_0^\alpha$ and $\psi_n^\alpha$ which can be found, e.g., in  \cite[Proposition~1]{wavelet-large-sieve}.

\medskip
\begin{proposition}
\label{lem:explicit-wavelet-trafo} 
\textit{Let $\alpha>0$, and $z=x+is$ as well as $w=x^{\prime
}+is^{\prime }$ be in $\mathbb{C}^{+}$. For every $n\in \mathbb{N}_{0}$,
one has  
\begin{align*}
\langle \pi (w)\psi_n^\alpha,&\pi(z)\psi_0^\alpha\rangle   =c_n^\alpha  \left( \frac{z-w }{z-
\overline{w}}\right) ^{n}  \left( \frac{{2}\sqrt{ss^{\prime }}}{i(\overline{w}-z)}
\right) ^{\alpha +1},\notag
\end{align*}
where $c_n^\alpha=\sqrt{\Gamma(n+\alpha+1)/\Gamma(\alpha+1)n!\, }$ .
}
\end{proposition}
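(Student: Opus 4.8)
The plan is to push everything to the Fourier side, where the time-scale shifts act by a modulation and an $L^2$-normalized dilation, and where the generating function of the Laguerre polynomials turns the inner product into an elementary Gamma integral.

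First I would apply Plancherel's theorem (with the convention $\widehat f(\xi)=\int f(t)e^{-i\xi t}\ud t$, so that $\ip{f}{g}=\tfrac1{2\pi}\ip{\widehat f}{\widehat g}$). Writing $\pi(z)=T_xD_s$ with $D_s$ the normalized dilation, one has $\widehat{\pi(z)\psi}(\xi)=e^{-ix\xi}\sqrt{s}\,\widehat\psi(s\xi)$. Since each $\widehat{\psi_n^\alpha}$ is real-valued, only the modulation is conjugated and the inner product collapses to
\[
\ip{\pi(w)\psi_n^\alpha}{\pi(z)\psi_0^\alpha}=\frac{\sqrt{ss'}}{2\pi}\int_0^\infty e^{-i(x'-x)\xi}\,\widehat{\psi_n^\alpha}(s'\xi)\,\widehat{\psi_0^\alpha}(s\xi)\ud\xi.
\]
Substituting \eqref{def-psi_n} and using $L_0^\alpha\equiv 1$, the $\xi$-dependence reduces (up to the product of the two normalizing constants) to
\[
\int_0^\infty \xi^\alpha e^{-a\xi}L_n^\alpha(2s'\xi)\ud\xi,\qquad a:=(s+s')+i(x'-x).
\]
A one-line computation identifies the exponent as $a=i(\overline w-z)$, which is the source of the denominator in the claimed formula; note $\mathrm{Re}\,a=s+s'>0$, so the Laplace integral converges and the non-integer powers of $a$ are taken in the principal branch.

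Next I would evaluate this Laplace-type transform of a Laguerre polynomial. Inserting the generating function $\sum_n L_n^\alpha(y)t^n=(1-t)^{-(\alpha+1)}\exp\!\big(-yt/(1-t)\big)$, integrating term by term against $\xi^\alpha e^{-a\xi}$ (each integral being a value of $\Gamma(\alpha+1)$), and reading off the coefficient of $t^n$ from the resulting binomial series yields
\[
\int_0^\infty \xi^\alpha e^{-a\xi}L_n^\alpha(2s'\xi)\ud\xi=\frac{\Gamma(n+\alpha+1)}{n!}\,\frac{(a-2s')^n}{a^{\,n+\alpha+1}}.
\]
(One may equally well cite a standard table of integrals.)

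It then remains to simplify the complex factors. A direct computation gives $a-2s'=-i(z-w)$, and combining this with $a=i(\overline w-z)=-i(z-\overline w)$ and the identity $(-i)^n i^n=1$ turns $(a-2s')^n/a^n$ into exactly $\big((z-w)/(z-\overline w)\big)^n$. Pairing the leftover factor $a^{-(\alpha+1)}=(i(\overline w-z))^{-(\alpha+1)}$ with the prefactor $(ss')^{(\alpha+1)/2}$ produces the factor $\big(2\sqrt{ss'}/i(\overline w-z)\big)^{\alpha+1}$, the power $2^{\alpha+1}$ being supplied by the normalizing constants. The remaining scalar $\tfrac1{2\pi}\cdot\tfrac{\Gamma(n+\alpha+1)}{n!}$ times those constants collapses to precisely $c_n^\alpha$. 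I expect the only real obstacle to be bookkeeping: matching the powers of $2$ and the $2\pi$ from Plancherel against the normalization in \eqref{def-psi_n}, and making sure the branches of the $n$-th and $(\alpha+1)$-st complex powers are assigned consistently throughout.
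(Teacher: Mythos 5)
Your proof is correct: the Plancherel reduction with $\widehat{\pi(z)\psi}(\xi)=e^{-ix\xi}\sqrt{s}\,\widehat{\psi}(s\xi)$, the Laplace-transform identity $\int_0^\infty \xi^\alpha e^{-a\xi}L_n^\alpha(2s'\xi)\,\ud\xi=\frac{\Gamma(n+\alpha+1)}{n!}\,(a-2s')^n a^{-(n+\alpha+1)}$ obtained from the generating function, the identifications $a=i(\overline{w}-z)$ and $a-2s'=-i(z-w)$, and the constant bookkeeping (the prefactor $2^{\alpha+1}$ merging with $(ss')^{(\alpha+1)/2}$ into $\bigl(2\sqrt{ss'}\bigr)^{\alpha+1}$, the rest collapsing to $c_n^\alpha$) all check out, as one can also sanity-check at $z=w$ where the formula reproduces orthonormality. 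Note that the paper does not prove this proposition at all but imports it by citation from \cite{wavelet-large-sieve}; your Fourier-side computation is essentially the standard derivation used there, with the tabulated Laguerre integral re-derived from first principles.
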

\medskip 
Upon applying the M\"obius transform, a straightforward computation shows that for  $u,v\in\mathbb{D}$  
\begin{align*}
\big|\langle \pi(T(v))&{\psi _{0}^{\alpha }},\pi(T(u))\psi _{n}^{\alpha } \rangle \big| \\ &=c_n^\alpha\, \rho_\D(u,v)^{n}\left(\frac{{(1-|u|^2)\, (1-|v|^2)}}{|1-\overline{u}v|^2}\right) ^{\frac{\alpha +1}{2}}.
\end{align*}
In  particular,   $\langle \pi(T(v))\psi _{0}^{\alpha },\pi(T(u))\psi _{n}^{\alpha }\big\rangle$ is nonzero whenever $u\neq v$ which implies that the following auxiliary function is well-defined for $u\neq v$
\begin{align*} 
H_n^\alpha(u,v):&= \left|\frac{ \big\langle \pi(T(0))\psi _{0}^{\alpha },\pi(T(u))\psi _{n}^{\alpha }\big\rangle }{ \big\langle \pi(T(v))\psi _{0}^{\alpha },\pi(T(u))\psi _{n}^{\alpha }\big\rangle }\right| ^{2}
\\ &=\left( \frac{\rho_\D(0,u)
 }{\rho_\D(u,v)}\right)^{2n} \left(\frac{|1-\overline{u}v|^2}{{ 1-|v|^2}} \right)^{\alpha+1} 
\\ &=  \frac{|u|^{2n} |1-\overline{u}v|^{2(n+\alpha+1)}}{|u-v|^{2n}( 1-|v|^2)^{\alpha+1}}.
\end{align*}
 Note that this expression is rotationally invariant, i.e., $H_n^\alpha(ue^{i\varphi},ve^{i\varphi})=H_n^\alpha(u,v)$ for any $\varphi\in(0,2\pi]$.
\medskip

\begin{lemma}\label{lem:aux}
 \textit{   Let $v=|v|$ and $u=|u|e^{-i\varphi}$ with  $0<|v|<R\leq |u|<1$. If $|\varphi|\leq (1-|v|R)$, 
    then 
    $$
    H_n^\alpha(u,v)\leq 2^{n+\alpha+1}\frac{(1-|v|R)^{2n+\alpha+1}}{(1-|v|/R)^{2n}}.
    $$
    }
\end{lemma}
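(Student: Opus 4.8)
The plan is to start from the explicit formula for $H_n^\alpha$ and split it into the two natural factors
$$
H_n^\alpha(u,v)=\left(\frac{|u|^2\,|1-\overline{u}v|^2}{|u-v|^2}\right)^{n}\left(\frac{|1-\overline{u}v|^2}{1-|v|^2}\right)^{\alpha+1},
$$
estimating the base of each factor separately so that the $n$-th power produces $2^n(1-|v|R)^{2n}/(1-|v|/R)^{2n}$ and the $(\alpha+1)$-th power produces $2^{\alpha+1}(1-|v|R)^{\alpha+1}$. Writing $a=|u|$ and $b=|v|$, the hypotheses read $0<b<R\le a<1$ and $|\varphi|\le 1-bR$, and since $v=b$ is real one has $|1-\overline{u}v|^2=1-2ab\cos\varphi+a^2b^2$ and $|u-v|^2=a^2-2ab\cos\varphi+b^2$.

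First I would control the quantity $|1-\overline{u}v|^2$, which occurs in both factors. Using the elementary inequality $\cos\varphi\ge 1-\varphi^2/2$ (valid for all real $\varphi$) yields $|1-\overline{u}v|^2\le (1-ab)^2+ab\,\varphi^2$. The hypothesis $|\varphi|\le 1-bR$ together with $ab<1$ bounds the second summand by $(1-bR)^2$, while $a\ge R$ gives $ab\ge bR$, hence $1-ab\le 1-bR$ and $(1-ab)^2\le(1-bR)^2$. Adding the two estimates produces the single clean bound $|1-\overline{u}v|^2\le 2(1-bR)^2$, which will be the workhorse of the argument.

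For the $(\alpha+1)$-factor I would then use $b<R$, which forces $1-bR\le 1-b^2$, so that $|1-\overline{u}v|^2\le 2(1-bR)(1-b^2)$ and therefore $\frac{|1-\overline{u}v|^2}{1-b^2}\le 2(1-bR)$. For the $n$-factor I would lower-bound the denominator by $|u-v|^2\ge(a-b)^2$ (taking $\cos\varphi\le 1$). The delicate point is the next move: one must resist replacing $a-b$ by $R-b$, and instead keep the ratio $\frac{a^2}{(a-b)^2}=\frac{1}{(1-b/a)^2}$ intact, using $a\ge R$ to get $b/a\le b/R$ and hence $\frac{a^2}{(a-b)^2}\le\frac{1}{(1-b/R)^2}$. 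Combined with $|1-\overline{u}v|^2\le 2(1-bR)^2$ this gives $\frac{a^2|1-\overline{u}v|^2}{|u-v|^2}\le\frac{2(1-bR)^2}{(1-b/R)^2}$.

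Raising the first bound to the power $n$ and the second to the power $\alpha+1$ and multiplying then yields exactly $2^{n+\alpha+1}(1-bR)^{2n+\alpha+1}/(1-b/R)^{2n}$, which is the assertion. The only genuine obstacle is the treatment of $|u-v|^2$ in the $n$-factor: the seemingly harmless simplification $(a-b)^2\ge(R-b)^2$ discards a factor $R^2$ and pushes the inequality in the wrong direction, so one must carry $a^2/(a-b)^2$ as a single unit and exploit $a\ge R$ inside it; everywhere else the estimates are routine consequences of $ab<1$, $a\ge R$, and $b<R$.
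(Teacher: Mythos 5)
Your proof is correct and is essentially the paper's own argument: both rest on the same three estimates, namely $|u-v|\ge |u|-|v|$ combined with $|v|/|u|\le |v|/R$ to produce the factor $(1-|v|/R)^{-2n}$, the comparison $1-|v|^2\ge 1-|v|R$, and the workhorse bound $|1-\overline{u}v|^2=(1-|uv|)^2+2|uv|(1-\cos\varphi)\le (1-|v|R)^2+\varphi^2\le 2(1-|v|R)^2$ via $1-\cos\varphi\le\varphi^2/2$. Your splitting of $H_n^\alpha$ into separate $n$-th and $(\alpha+1)$-th power factors (and applying $1-|v|R\le 1-|v|^2$ in the numerator rather than the denominator) is only an organizational variant of the paper's single-fraction estimate, so there is nothing to flag.
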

\noindent\begin{proof}
First, we  note that $|1-\beta e^{i\varphi}|^2=1+\beta^2-2\beta\cos\varphi$. Therefore,
\begin{align}\label{eq:H_na-est}
H_n^\alpha(u,v)&=    \frac{|u|^{2n}\, \big|1-|uv|e^{i\varphi}\big|^{2(n+\alpha+1)}}{\big||u|e^{-i\varphi}-|v|\big|^{2n}\, ( 1-|v|^2)^{\alpha+1}} \notag 
\\
&\leq \frac{\big|1-|uv| e^{i\varphi}\big|^{2(n+\alpha+1)}}{(1-|v|/|u|)^{2n}\, (1-|v|R)^{\alpha+1}} \notag
\\ 
&\leq  \frac{(1+|uv|^2-2|uv|\cos\varphi)^{n+\alpha+1}}{(1-|v|/R)^{2n}\, (1-|v|R)^{\alpha+1}}. 
\end{align}
Observe that $2(1-\cos\varphi) \leq \varphi^2$ for every $\varphi\in\R$.  Using the assumption on $\varphi$ we hence deduce that
\begin{align*}
    1+|uv|^2-2|uv|\cos\varphi&=(1-|uv|)^2+2|uv|(1-\cos\varphi)\\ &\leq  (1-|v|R)^2+  \varphi^2
    \\
    &\leq 2(1-|v|R)^2.
\end{align*}
Plugging this estimate into \eqref{eq:H_na-est} shows
\begin{align*}
H_n^\alpha(u,v)&\leq   \frac{2^{n+\alpha+1}(1-|v|R)^{2(n+\alpha+1)}}{(1-|v|/R)^{2n}\, (1-|v|R)^{\alpha+1}}
\\ &=\frac{2^{n+\alpha+1}(1-|v|R)^{2n+\alpha+1}}{(1-|v|/R)^{2n} },
\end{align*}
which concludes the proof. \hfill $\Box$
\end{proof}
\medskip

With this auxiliary result in place, we are now ready to prove our main result. \medskip
\begin{theorem}\label{thm:gaps-wavelets}
\textit{Let $n\in\N_0$, $\alpha>0$, and $\mu$ be a sampling measure for the wavelet transform with wavelet 
$ \psi_n^\alpha$ and sampling constants $A,B>0$.}
 
\textit{ 
 If there exists $z\in\C^+$ s.t. $\mu( \mathcal{D}_R(z))=0$, then 
         \begin{equation}\label{eq:th-2}
    R\leq 1- \left(\frac{C_{n,\alpha}}{\pi}\ \frac{A}{ B}\right)^{1/\alpha},
  \end{equation}
  where }
  $$
  C_{n,\alpha}=\left\{\begin{array}{rl}
    4^{-(\alpha+1)},   & n=0,  \\
     6^{-(2n+\alpha+1)},  &  n\in\N.
  \end{array}\right. $$
\end{theorem}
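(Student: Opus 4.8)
The plan is to extract an upper bound on $A/B$ from the lower sampling inequality by applying it to the single test function $\psi_0^\alpha$ and then comparing its wavelet transform to transforms of translated copies of $\psi_0^\alpha$ on the complement of the gap, where the auxiliary function $H_n^\alpha$ and Lemma~\ref{lem:aux} furnish the quantitative control.

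First I would normalize the gap to be centered at $i$. Since the wavelet transform is covariant, $W_\psi(\pi(w)f)(z) = W_\psi f(w^{-1}\cdot z)$ with $\pi(w)$ unitary, and since $\rho_{\C^+}(z,w)=\rho_{\C^+}(z^{-1}\cdot w,i)$ gives $\mathcal{D}_R(z_0)=z_0\cdot\mathcal{D}_R(i)$, the pushforward $\tilde\mu(E):=\mu(z_0\cdot E)$ is again a sampling measure with the \emph{same} constants $A,B$ and satisfies $\tilde\mu(\mathcal{D}_R(i))=0$. Thus I may assume $z=i$; in the disk model the gap is $\{u\in\D:|u|<R\}$ and $\mu$ lives in $\{|u|\geq R\}$. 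Applying the lower bound to the unit vector $f=\psi_0^\alpha$ (orthonormality of $\{\psi_n^\alpha\}$) yields $A\leq\int_{\{|u|\geq R\}}|W_{\psi_n^\alpha}\psi_0^\alpha(T(u))|^2\,d\mu$.

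The crux is to dominate this integrand. By definition of $H_n^\alpha$ one has, for any $v$, the identity $|W_{\psi_n^\alpha}\psi_0^\alpha(T(u))|^2 = H_n^\alpha(u,v)\,\big|W_{\psi_n^\alpha}(\pi(T(v))\psi_0^\alpha)(T(u))\big|^2$. The obstacle is that Lemma~\ref{lem:aux} bounds $H_n^\alpha$ only in the thin angular window $|\varphi|\leq 1-|v|R$ around a \emph{real} $v$, so a single comparison window cannot cover the whole annulus $\{|u|\geq R\}$. To remedy this I would partition the annulus into $K$ angular sectors $S_k$ of half-width $1-tR$ (where $t=|v|$), use in $S_k$ the rotated window $g_k=\pi(T(v_k))\psi_0^\alpha$ with $v_k=te^{i\bar\theta_k}$ centered on the sector, and invoke rotational invariance $H_n^\alpha(ue^{i\varphi},ve^{i\varphi})=H_n^\alpha(u,v)$ to bring each sector into the hypotheses of the Lemma. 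This gives, uniformly on $S_k$, $H_n^\alpha(u,v_k)\leq M:=2^{n+\alpha+1}(1-tR)^{2n+\alpha+1}/(1-t/R)^{2n}$, while the upper sampling bound yields $\int_{S_k}|W_{\psi_n^\alpha}g_k|^2\,d\mu\leq B\|g_k\|^2=B$. Summing over the sectors, and using $K\leq 2\pi/(1-tR)$, I obtain $A\leq KMB\leq \pi\,2^{n+\alpha+2}\,\dfrac{(1-tR)^{2n+\alpha}}{(1-t/R)^{2n}}\,B$.

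Finally I would optimize the radial parameter $t$. For $n=0$ the denominator is trivial, so $t=R$ is admissible and gives $A\leq \pi 2^{\alpha+2}(1-R^2)^\alpha B\leq \pi 4^{\alpha+1}(1-R)^\alpha B$ via $1-R^2\leq 2(1-R)$. For $n\geq1$ the choice $t=R$ makes $1-t/R$ vanish, forcing instead $t=R^2$, for which $1-tR=1-R^3\leq 3(1-R)$ and $1-t/R=1-R$ cancels the surplus power, giving $A\leq \pi 2^{n+\alpha+2}3^{2n+\alpha}(1-R)^\alpha B\leq \pi 6^{2n+\alpha+1}(1-R)^\alpha B$ after checking $2^{n+\alpha+2}3^{2n+\alpha}\leq 6^{2n+\alpha+1}$ for $n\geq1$. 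In both cases this reads $A/B\leq(\pi/C_{n,\alpha})(1-R)^\alpha$, and solving for $R$ yields \eqref{eq:th-2}. The delicate points are exactly the angular restriction that forces the sector decomposition (whose count contributes the factor $\pi$) and the division-by-zero at $t=R$ for $n\geq1$, which dictates the different constant $6^{-(2n+\alpha+1)}$.
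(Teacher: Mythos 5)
Your proposal is correct and follows essentially the same route as the paper's own proof: the same test function $\pi(z)\psi_0^\alpha$, the same sector decomposition with rotated comparison points $v_k$ and rotational invariance of $H_n^\alpha$, the same application of Lemma~\ref{lem:aux}, and the same radial choices ($t\to R$, i.e.\ the paper's $\kappa\to 1$, for $n=0$, and $t=R^2$, i.e.\ $\kappa=2$, for $n\geq 1$). The only cosmetic differences are your up-front normalization $z=i$ via the pushforward measure (the paper instead carries $z$ through using unitarity of $\pi(z)$) and your assertion that $t=R$ is ``admissible'' for $n=0$, which strictly violates the hypothesis $|v|<R$ of Lemma~\ref{lem:aux} but is repaired exactly as in the paper by continuity of the bound in $t$.
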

\noindent\begin{proof} 
We partition $\mathbb{D}\backslash\{0\}$ into $K$ circular sectors $$S_k\hspace{-.5pt}=\hspace{-.5pt}\Big\{re^{i\varphi}\in\mathbb{D}\backslash\{0\}:\ \frac{\pi(2k-1)}{K}\leq\varphi<\frac{\pi (2k+1)}{K}\Big\},$$
$k=0,...,K-1,$ and pick $K$ points $v_k=re^{2\pi ik/K}\in S_k$ with $r<R$.  The appropriate choice  of $K$ and $r$ in terms of $R$ will be determined later.   Moreover, we set $P_k=T(S_k)$
  and point out that by a straightforward computation    $w\in (z\cdot P_k)\backslash \mathcal{D}_R(z)$ if and only if $T^{-1}(z^{-1}\cdot w)\in S_k\backslash \mathbb{D}_R$.  Therefore, if $\mu$ is a sampling measure s.t.  $\mu( \mathcal{D}_R(z))=0$, then
    \begin{align*}
        A&=A \|\pi(z)\psi_0^\alpha\|_{\H^2}^2 
        \\
        &\leq 
        \int_{\C^+}\big|\langle \pi(z)\psi_0^\alpha,\pi(w)\psi_n^\alpha\rangle |^2d\mu(w)
       \\
         &  =        \int_{\C^+ }\big|\langle \psi_0^\alpha,\pi(z^{-1}\cdot w)\psi_n^\alpha\rangle |^2 d\mu(w)
          \\
         &  =   \sum_{k=1}^K    \int_{z\cdot P_k}\left|\frac{ \langle  \psi_0^\alpha,\pi(z^{-1}\cdot w)\psi_n^\alpha\rangle  }{ \langle  \pi(T(v_k))\psi_0^\alpha,\pi(z^{-1}\cdot w)\psi_n^\alpha\rangle  }\right|^2 \times \\ & \hspace{.5cm}\times \big|\langle \pi(T(v_k))\psi_0^\alpha,\pi(z^{-1}\cdot w)\psi_n^\alpha\rangle \big|^2 d\mu(w)
       \\
       &= \sum_{k=1}^K    \int_{z\cdot P_k}H_n^\alpha\big(T^{-1}(z^{-1}\cdot w),v_k\big)\times \\ &\hspace{.5cm}\times  \big|\langle \pi(z\cdot T(v_k))\psi_0^\alpha,\pi( w)\psi_n^\alpha\rangle \big|^2 d\mu(w)
      \\
          &\leq  \sum_{k=1}^K    \sup_{u\in S_k\backslash \mathbb{D}_R}H_n^\alpha(u,v_k)\times \\ &\hspace{0.5cm} \times \int_{\C^+}\big|\langle \pi(z\cdot T(v_k))\psi_0^\alpha,\pi(w)\psi_n^\alpha\rangle \big|^2d\mu(w)
      \\
        &\leq  K  B    \sup_{u\in S_0\backslash \D_R }H_n^\alpha( u,v_0) ,
    \end{align*}  
    where we used that $\big\| \pi(z)  \psi_0^\alpha \big\|_{\H^2}=1$  and the rotational invariance of $H_n^\alpha$ in the final step.
     
Let us choose $r=R^\kappa$ for $\kappa>1$, and $K=\lceil\pi/(1-rR)\rceil$. Any $u=|u|e^{i\varphi}\in S_0\backslash \mathbb{D}_R$ satisfies $|u|\geq R$ as well as $|\varphi|\leq  \pi/K\leq (1-rR)$. We may thus apply Lemma~\ref{lem:aux} to show that
\begin{align*}
A&\leq\left\lceil\frac{\pi}{1-R^{1+\kappa}}\right\rceil\, {2^{n+\alpha+1}\, B}\, \frac{(1-R^{1+\kappa})^{2n+\alpha+1}}{(1-R^{\kappa-1})^{2n}} \\ &\leq \pi\,  {2^{n+\alpha+2}\, B}\,  \frac{(1-R^{1+\kappa})^{2n+\alpha}}{(1-R^{\kappa-1})^{2n}}.
\end{align*}
  This inequality simplifies to $A\leq   \pi\, {2^{\alpha+2}B}  {(1-R^{1+\kappa})^{\alpha}}$ if $n=0$. Since the right hand side is continuous for  $\kappa\geq -1 $, we conclude that the inequality holds in the limit $\kappa\to 1$. Using   
$1-R^{2} \leq  2(1-R)$ and solving for $R$ thus shows $R\leq 1-\big( A/(\pi4^{\alpha+1} B)\big)^{1/\alpha}$.

For $n\in\N$, we choose $\kappa=2$ and note that $1-R^3 \leq$ $ 3(1-R)$which allows us to conclude that $R\leq 1-\big(A/(4\pi 6^{2n+\alpha}B)\big)^{1/\alpha}$.\hfill $\Box$
    \end{proof}
\medskip
        \begin{remark}
        \textit{
      (1). One could improve the constants in Theorem~\ref{thm:gaps-wavelets} by optimizing the choice of $r$ in our proof. 
    However, due to page limitations, we chose not to pursue this idea.}

       \textit{ (2). If $\mu=\sum_{\lambda\in\Lambda}\delta_\lambda$ for some discrete set $\Lambda\subset \C^+$,  then $\mu$ is a sampling measure if and only if $\{\pi(\lambda)\psi_n^\alpha\}_{\lambda\in\Lambda}$ is a wavelet frame. Theorem~\ref{thm:gaps-wavelets} thus provides a bound on the maximal radius of a pseudohyperbolic disk that does intersect $\Lambda$.
        }

        \textit{(3). Our approach is independent of the Hilbert space structure and a slight adaptation would therefore also provide bounds on the gaps of $L^p$-sampling measures where the $L^p_\mu$-norm of $W_{\psi_n^\alpha} f$ is compared to the $L^p$-co-orbit space norm of $f$. We refer to \cite{berge_coorbit} for an introduction to co-orbit theory.} 
    \end{remark}

    \section{Bergman Spaces}\label{sec:bergman}
\medskip
 
Let $\mathcal{A}_\alpha(\C^+)$ be the space of holomorphic functions on the upper half plane that satisfy 
$$\|F\|^2_{\mathcal{A}_\alpha}:=\int_{\C^+}|F(z)|^2 s^{\alpha}d z<\infty.$$ 
    A  measure $\mu$ is called a \emph{sampling measure}  for  $\mathcal{A}_\alpha(\C^+)$ if 
$$
A\|F\|^2_{\mathcal{A}_\alpha}\leq \int_{\C^+}|F(z)|^2s^{\alpha} d\mu(z)\leq B\|F\|^2_{\mathcal{A}_\alpha}
$$
The \emph{Bergman transform}  $\text{B}_\alpha :\H^2(\C^+)\to\mathcal{A}_\alpha(\C^+)$
$$
\text{B}_\alpha f(z)=s^{-\frac{\alpha}{2}-1}W_{\psi_0^{\alpha+1}}f(z)
$$
is a constant multiple of an isometric isomorphism. This property immediately yields the following corollary of Theorem~\ref{thm:gaps-wavelets}.

\medskip
    \begin{corollary}
    \textit{
        Let $\mu$ be a sampling measure for   $\mathcal{A}_\alpha(\C^+)$ with   constants $A,B>0$. If $\mu( \mathcal{D}_R(z))= 0$ for some $z\in\C^+$, then     \begin{equation}\label{eq:cor-1}
    R\leq 1- \left(\frac{1}{  4^{\alpha+1}\pi}\ \frac{A}{ B}\right)^{1/\alpha},
    \end{equation}}
    \end{corollary}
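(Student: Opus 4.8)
The plan is to deduce the Corollary directly from Theorem~\ref{thm:gaps-wavelets} by exploiting the stated relationship between the Bergman transform and the wavelet transform, rather than reproving anything from scratch. The key observation is that $\text{B}_\alpha$ is a constant multiple of an isometric isomorphism between $\H^2(\C^+)$ and $\mathcal{A}_\alpha(\C^+)$, so sampling measures for the Bergman space transfer to sampling measures for the wavelet transform $W_{\psi_0^{\alpha+1}}$ with the \emph{same} ratio $A/B$, and conversely. Since the bound in Theorem~\ref{thm:gaps-wavelets} depends only on this ratio (not on the individual constants), the corollary should follow by specializing to the case $n=0$ with the index shift $\alpha\mapsto\alpha+1$.

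First I would make the transfer of the sampling inequality precise. Given $F\in\mathcal{A}_\alpha(\C^+)$, write $F=c\,\text{B}_\alpha f$ for the appropriate $f\in\H^2(\C^+)$ and constant $c$ (isometry up to a fixed multiple), and substitute the defining identity $\text{B}_\alpha f(z)=s^{-\alpha/2-1}W_{\psi_0^{\alpha+1}}f(z)$ into the Bergman sampling inequality. The factor $s^\alpha$ appearing in the weight $|F(z)|^2 s^\alpha$ is designed to cancel the $s^{-\alpha-2}$ coming from $|s^{-\alpha/2-1}|^2$, leaving exactly $|W_{\psi_0^{\alpha+1}}f(z)|^2\,s^{-2}$, which is the correct density for the wavelet sampling condition. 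Thus $d\nu(z):=s^{-2}\,d\mu(z)$ becomes a sampling measure for $W_{\psi_0^{\alpha+1}}$ with sampling constants proportional to $A$ and $B$, so that their ratio is preserved.

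Next I would apply Theorem~\ref{thm:gaps-wavelets} with $n=0$ and the shifted parameter $\alpha+1>0$. The constant there is $C_{0,\beta}=4^{-(\beta+1)}$ for the wavelet index $\beta$; setting $\beta=\alpha+1$ yields $C_{0,\alpha+1}=4^{-(\alpha+2)}$ and an exponent $1/(\alpha+1)$. The final point to check is that the vanishing condition is preserved: $\mu(\mathcal{D}_R(z))=0$ if and only if $\nu(\mathcal{D}_R(z))=0$, since $\nu$ and $\mu$ differ only by the strictly positive density $s^{-2}$ and hence share the same null sets. With the gap condition intact, Theorem~\ref{thm:gaps-wavelets} gives a bound of the form $R\leq 1-\big(4^{-(\alpha+2)}A/(\pi B)\big)^{1/(\alpha+1)}$.

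The main obstacle I anticipate is purely bookkeeping: the exponent and constant in the stated corollary \eqref{eq:cor-1} read $1/\alpha$ and $4^{\alpha+1}$, whereas the direct reduction above produces $1/(\alpha+1)$ and $4^{\alpha+2}$. This strongly suggests that the parameter $\alpha$ in the Bergman-space definition is related to the wavelet index $\psi_0^{\alpha+1}$ by a shift that I must track carefully, and that the displayed \eqref{eq:cor-1} is written in terms of the \emph{wavelet} index rather than the Bergman weight exponent (or that a convention $\alpha\mapsto\alpha-1$ is in force). I would therefore verify the normalization of the weight $s^\alpha$ against the isometry \eqref{isometry} for $\psi_0^{\alpha+1}$ and reconcile the indexing so that the exponent genuinely matches the claimed $1/\alpha$; once the index convention is pinned down, no further analysis is needed and the corollary is immediate.
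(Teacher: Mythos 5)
Your reduction is exactly the paper's proof: the paper offers nothing beyond the observation that $\mathrm{B}_\alpha$ is a constant multiple of an isometric isomorphism and that this ``immediately yields'' the corollary, and your transfer --- the weight $s^\alpha$ cancels $|s^{-\alpha/2-1}|^2$ to leave the wavelet-sampling density $s^{-2}$, the constants rescale by the same fixed multiple so that $A/B$ is preserved, and $\mu$ and $d\nu=s^{-2}d\mu$ share null sets --- is precisely that one-line argument made explicit.

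The bookkeeping discrepancy you flag is real, but your hypothesized resolution (a hidden index convention, $\alpha\mapsto\alpha-1$) is not the right one: with the paper's printed definitions no shift is in force, and the direct application of Theorem~\ref{thm:gaps-wavelets} with $n=0$ and wavelet index $\alpha+1$ gives $R\leq 1-\big(A/(4^{\alpha+2}\pi B)\big)^{1/(\alpha+1)}$, exactly as you computed. The step you left open is to check that this bound \emph{implies} the printed \eqref{eq:cor-1}. Since $A\leq B$ necessarily (otherwise the two sampling inequalities are contradictory), one has $t:=A/(\pi B)\leq 1/\pi$, and comparing logarithms shows that $\big(t\,4^{-(\alpha+1)}\big)^{1/\alpha}\leq\big(t\,4^{-(\alpha+2)}\big)^{1/(\alpha+1)}$ holds if and only if $\ln t\leq\ln 4$, which is satisfied with room to spare. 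Hence $1-\big(t\,4^{-(\alpha+2)}\big)^{1/(\alpha+1)}\leq 1-\big(t\,4^{-(\alpha+1)}\big)^{1/\alpha}$, and the stated corollary follows from your (sharper) transferred bound; the exponent $1/\alpha$ and constant $4^{\alpha+1}$ as printed appear to be a harmless indexing slip in the paper rather than a different convention, and what your reduction actually proves is the stronger estimate with exponent $1/(\alpha+1)$ and constant $4^{\alpha+2}$.
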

\medskip
To the best of our knowledge, this result is not known even for the special case that  $\mu=\sum_{\lambda\in\Lambda}\delta_\lambda$ is a discrete measure , i.e., if $\Lambda$ is a \emph{set of stable sampling} for $\mathcal{A}_\alpha(\C^+)$.

\section{Sampling Measures for the short-time Fourier transform}
\medskip 
 The \emph{short-time Fourier transform  (STFT)} of a function $f\in L^2(\R)$ using a window $g\in L^2(\R)$ is given by
$$
V_gf(z)=\langle f,\pi(z)g\rangle, \quad z\in \C,
$$
where $\pi(z)g(t)=e^{2\pi i \xi t}g(t-x),$ $z=x+i\xi$. The STFT has, among other useful features, the covariance property
\begin{equation}\label{eq:cov}
V_g(\pi(w)f)(z)=e^{-2\pi i x'(\xi-\xi')} V_gf(z-w),
\end{equation}
$w=x'+i\xi'$, and satisfies \emph{Moyal's formula}
$$
\int_\C |V_g f(z)|^2dz=\|f\|^2_2\|g\|_2^2,\quad f,g\in L^2(\R).
$$
For a thorough introduction to time-frequency analysis we refer to \cite{groe1}.
A  measure $\mu$ is called a \emph{sampling measure} for the STFT if  there exist $A,B>0$ s.t. for any $f\in L^2(\R)$
$$
A\|f\|^2_{2} \leq \int_{\C}| V_g f(z)|^2  d\mu(z)\leq B\|f\|^2_{2}.
$$
If $\mu=\sum_{\lambda\in\Lambda}\delta_\lambda$ (for $\Lambda\subset \C$  discrete) is a sampling measure, then $\{\pi(\lambda)g\}_{\lambda\in\Lambda}$ forms a \textit{Gabor frame}, i.e.,
for $f\in L^2(\R)$
$$
A\|f\|^2_{2} \leq \sum_{\lambda\in\Lambda}| \langle f, \pi(\lambda)g\rangle |^2 \leq B\|f\|^2_{2}.
$$
 If $\{\pi(\lambda)g\}_{\lambda\in\Lambda}$ is a Gabor frame and if $|V_gg(z)|^2\lesssim (1+|z|)^{-(2+\sigma)}$, for some $\sigma>0$,  then Theorem~1.3 in  Papageorgiou and van Velthoven's paper \cite{jordy24} established quantitative bounds for the maximal radius of balls that do not intersect $\Lambda$. Their proof can be readily adapted for windows with exponential or Gaussian decay, as well as for  general sampling measures which would implicate a version of Theorem~\ref{thm:gaps-gabor} below. We nevertheless chose to include our  proof of Theorem~\ref{thm:gaps-gabor} based on the elementary ideas developed in Section~\ref{sec:wavelets} as it directly provides explicit constants.

We assume that the window function is chosen from the family of 
 Hermite functions $h_n(t):=c_n\hspace{1pt} e^{-\pi t^2}\frac{d}{dt}e^{-2\pi t^2}$, where $c_n$ is defined s.t.  $\|h_n\|_2=1$. It is well-known that  
$|V_{h_n}h_0(z)|=C_n|z|^ne^{-\pi|z|^2/2}$, for some $C_n\in\R^+$, see, e.g., \cite{abspe18-sieve}. Let us define the   auxiliary function
\begin{align}
H_n(z,w)&=\left|\frac{\langle \pi(0)h_0,\pi(z)h_n\rangle}{\langle \pi(w)h_0,\pi(z)h_n\rangle}\right|^2 \notag \\ &=\frac{|z|^{2n}}{|z-w|^{2n}}e^{-\pi (|z|^2-|z-w|^2)},\label{eq:H_n}
\end{align}
where we used \eqref{eq:cov}. Note that, just like in the wavelet case, $H_n$ is  rotationally invariant, i.e., $H_n(ze^{i\varphi},we^{i\varphi})=H_n(z,w)$, $\varphi\in(0,2\pi]$.
 \medskip
\begin{lemma}\label{lem:aux-stft}
    \textit{Let $n\in\N_0$, $w=R/2$, and $z=re^{i\varphi}$ with $r\geq R$ and $|\varphi|\leq \pi/5$. Then }
$$
H_n(z,w)\leq 4^n e^{-\pi R^2/2}.
$$    
\end{lemma}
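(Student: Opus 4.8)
The plan is to bound the two factors of $H_n(z,w)$ in \eqref{eq:H_n} separately, namely the rational factor $|z|^{2n}/|z-w|^{2n}$ and the exponential factor $e^{-\pi(|z|^2-|z-w|^2)}$, and to exhibit the constraints $|\varphi|\le\pi/5$, $r\ge R$, $w=R/2$ as exactly what is needed to control each. First I would compute $|z-w|^2 = r^2 + R^2/4 - rR\cos\varphi$ using the law of cosines (equivalently the identity $|a-be^{i\varphi}|^2 = a^2+b^2-2ab\cos\varphi$ already invoked in the wavelet proof), and similarly expand $|z|^2 - |z-w|^2 = rR\cos\varphi - R^2/4$. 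The exponent then becomes $-\pi(rR\cos\varphi - R^2/4)$, so the exponential factor equals $e^{\pi R^2/4}\,e^{-\pi rR\cos\varphi}$.

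For the exponential factor I would use $\cos\varphi \ge \cos(\pi/5)$ together with $r\ge R$ to get $e^{-\pi rR\cos\varphi}\le e^{-\pi R^2\cos(\pi/5)}$, giving an overall exponential bound $e^{\pi R^2/4}e^{-\pi R^2\cos(\pi/5)} = e^{-\pi R^2(\cos(\pi/5)-1/4)}$. Since $\cos(\pi/5)=(1+\sqrt5)/4\approx0.809$, we have $\cos(\pi/5)-1/4 > 1/2$, which already yields the desired $e^{-\pi R^2/2}$ decay with room to spare; the slack here is presumably what absorbs the rational factor. For the rational factor, I would show $|z-w| \ge c\,|z|$ for a suitable constant, i.e. that $z$ cannot get too close to $w$ relative to its own modulus. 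The worst case is small $\varphi$ and $r$ near $R$; bounding $|z-w|^2/|z|^2 = 1 + R^2/(4r^2) - (R/r)\cos\varphi \ge 1 - \cos\varphi$ (using $r\ge R$ and discarding the positive $R^2/(4r^2)$ term conservatively, or keeping it) should give $|z|^{2n}/|z-w|^{2n}\le 4^n$, matching the claimed constant. I would verify that the constant $4$ survives across all admissible $r\ge R$, checking in particular the regime $r\to\infty$ where the ratio $\to1$ and the regime $r=R$, $\varphi=\pi/5$.

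The main obstacle I anticipate is coupling the two factors correctly: the rational factor is largest exactly where $z$ is closest to $w$ (small $\varphi$, $r\approx R$), but there the exponential factor is most favorable, whereas at $\varphi=\pi/5$ the exponential is weakest but the rational factor is tame. So a crude worst-case bound applied factor-by-factor might not immediately close, and I may need to argue that the product is maximized at one endpoint of the admissible region, or simply choose the $\pi/5$ threshold and the $w=R/2$ placement so that the factor-by-factor estimates multiply to exactly $4^n e^{-\pi R^2/2}$. Concretely, I would reduce to showing $H_n(z,w)\le 4^n e^{-\pi R^2/2}$ by establishing the two inequalities
\begin{equation*}
\frac{|z|^{2n}}{|z-w|^{2n}}\le 4^n,\qquad e^{-\pi(|z|^2-|z-w|^2)}\le e^{-\pi R^2/2},
\end{equation*}
each valid on the full range $r\ge R$, $|\varphi|\le\pi/5$, and then multiply. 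If a naive split leaves a residual factor, the fallback is to keep a fraction of the exponential decay to dominate the rational term — that is, write the exponential bound as $e^{-\pi R^2/2}\cdot e^{-\delta}$ for some positive $\delta$ depending on $\varphi,r$ and check $e^{-\delta}$ times the rational factor stays below $4^n$ — but I expect the chosen constants $\pi/5$ and $R/2$ to be calibrated precisely so that the clean factor-by-factor argument suffices.
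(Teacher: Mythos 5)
Your overall strategy is exactly the paper's: expand $|z-w|^2 = r^2 + R^2/4 - rR\cos\varphi$ by the law of cosines and bound the two factors of \eqref{eq:H_n} separately, with the placement $w=R/2$ and the threshold $\pi/5$ calibrated so that the naive factor-by-factor split closes (your anticipated coupling problem does not materialize). Your exponential-factor estimate is correct and even slightly sharper than the paper's, which simply uses $\cos\varphi\geq 3/4$ for $|\varphi|\leq\pi/5$ to get $|z|^2-|z-w|^2 = rR\cos\varphi - R^2/4 \geq 3rR/4 - R^2/4 \geq R^2/2$.

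However, the inequality you actually wrote down for the rational factor fails. Discarding the $R^2/(4r^2)$ term and bounding $|z-w|^2/|z|^2 \geq 1-\cos\varphi$ is useless: as $\varphi\to 0$ the right-hand side tends to $0$, so it yields no bound on $|z|^{2n}/|z-w|^{2n}$ at all — and small $\varphi$ with $r$ near $R$ is precisely the worst case you yourself identified (at $r=R$, $\varphi=0$ the ratio equals $4^n$ exactly, so your endpoint check at $\varphi=\pi/5$ inspects the wrong corner). The quadratic term must be kept: using only $\cos\varphi\leq 1$ and completing the square,
$$
\frac{|z-w|^2}{|z|^2} \;=\; 1-\frac{R}{r}\cos\varphi+\frac{R^2}{4r^2} \;\geq\; \left(1-\frac{R}{2r}\right)^{2} \;\geq\; \frac14
$$
since $r\geq R$; this is the paper's step $|z-w|^2\geq (r-R/2)^2\geq r^2/4$, and it uses no condition on $\varphi$ whatsoever — the angular restriction is needed only for the exponential factor. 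You gesture at this fix with ``or keeping it,'' but since it is the only route that works, it must be the argument rather than a parenthetical alternative. With that correction, your proof coincides with the paper's.
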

\begin{proof}
First, we note that  
\begin{align*}|z-w|^{2}&=R^2/4+r^2-Rr \cos\varphi
\\ &\geq R^2/4+r^2-Rr=(r-R/2)^2\geq r^2/4.
\end{align*}
Second, if $|\varphi|\leq \pi/5$,
then $\cos\varphi\geq 3/4$. Consequently,
\begin{align*}
|z|^2-|z-w|^2&=r^2-R^2/4-r^2+Rr\cos\varphi \\ & \geq -R^2/4+3rR/4 \geq  R^2/2.
\end{align*}
Plugging the previous two estimates into \eqref{eq:H_n} yields 
$$
H_n(z,w)\leq \frac{r^{2n}}{r^{2n}/4^n}e^{-\pi R^2/2}=4^n e^{-\pi R^2/2},
 $$
which was to be shown. \hfill $\Box$
\end{proof}
 \medskip

\begin{theorem}\label{thm:gaps-gabor}
\textit{Let $n\in\N_0$, and $\mu$ be a sampling measure for the STFT with window $h_n$ and sampling bounds $A,B>0$.
If $\mu(\mathbb{D}_R(z))= 0$ for some $z\in\C$, then }
    \begin{equation}\label{eq:th-3}
    R^2\leq  \frac{2}{\pi}\log\left(   { 4^n5 }\  \frac{B}{A}\right).
    \end{equation}
\end{theorem}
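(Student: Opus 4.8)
The plan is to mirror the proof of Theorem~\ref{thm:gaps-wavelets}, exploiting the covariance property \eqref{eq:cov} and Moyal's formula in place of the wavelet isometry, and partitioning a Euclidean disk into circular sectors rather than working in $\mathbb{D}$. First I would start from the sampling lower bound applied to $f=\pi(z)h_0$, which has unit norm, so that $A\leq\int_\C|V_{h_n}(\pi(z)h_0)(w)|^2d\mu(w)$. Using the covariance property \eqref{eq:cov}, the modulus $|V_{h_n}(\pi(z)h_0)(w)|$ equals $|V_{h_n}h_0(w-z)|$, so after the substitution $w\mapsto w-z$ the integral becomes $\int_\C|\langle h_0,\pi(w)h_n\rangle|^2d\mu_z(w)$ over the translated measure, and the hypothesis $\mu(\mathbb{D}_R(z))=0$ becomes a vanishing on $\mathbb{D}_R$ centered at the origin.

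Next I would partition $\mathbb{D}\backslash\{0\}$—more precisely the region $\C\backslash\mathbb{D}_R$—into $K$ congruent circular sectors $S_k=\{re^{i\varphi}:\ \frac{\pi(2k-1)}{K}\leq\varphi<\frac{\pi(2k+1)}{K}\}$ and pick reference points $w_k=(R/2)e^{2\pi ik/K}$ in each sector, exactly as in the wavelet argument. In each sector I insert the ratio $H_n(z,w_k)$ (times its reciprocal) as a multiplicative factor, bound the ratio by its supremum over the sector, and use the covariance property again to recognize the remaining integral as $\int_\C|V_{h_n}(\pi(w_k)h_0)(w)|^2d\mu(w)\leq B$. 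The rotational invariance of $H_n$ then collapses all $K$ sector suprema to a single one, yielding
\begin{align*}
A\leq K\,B\,\sup_{z\in S_0\backslash\mathbb{D}_R}H_n(z,w_0).
\end{align*}

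The concrete choice is to take $K=5$, so that each sector has half-angle $\pi/K=\pi/5$, which is precisely the angular hypothesis of Lemma~\ref{lem:aux-stft}. Applying that lemma with $w=w_0=R/2$ gives $\sup_{z\in S_0\backslash\mathbb{D}_R}H_n(z,w_0)\leq 4^n e^{-\pi R^2/2}$, hence $A\leq 5\,B\,4^n e^{-\pi R^2/2}$. Rearranging, $e^{\pi R^2/2}\leq 4^n5\,B/A$, and taking logarithms yields the claimed bound \eqref{eq:th-3}. The main point to get right is the bookkeeping of the two covariance phase factors: the unimodular factor $e^{-2\pi ix'(\xi-\xi')}$ from \eqref{eq:cov} drops out upon taking $|\cdot|^2$, so it plays no role in the estimates, but one must verify that both applications—first to $\pi(z)h_0$ and then to $\pi(w_k)h_0$—leave only the translated STFT moduli. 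Unlike the wavelet case there is no $r$-parameter to optimize and no ceiling-function slack, so once the sectors are aligned with the $\pi/5$ threshold of Lemma~\ref{lem:aux-stft} the argument closes cleanly with $K=5$.
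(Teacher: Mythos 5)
Your proposal is correct and follows essentially the same route as the paper's own proof: the sampling lower bound applied to $\pi(z)h_0$, covariance to center at the origin, a partition into $K=5$ sectors with reference points $u_k=(R/2)e^{2\pi ik/5}$, insertion of the ratio $H_n$, the upper sampling bound $B$, rotational invariance, and Lemma~\ref{lem:aux-stft}. Your closing observations (the unimodular phase in \eqref{eq:cov} vanishing under $|\cdot|^2$, and the absence of any $r$-optimization or ceiling slack unlike the wavelet case) are accurate and match the paper's treatment.
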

\noindent\begin{proof}
     We  proceed similarly to  the proof of Theorem~\ref{thm:gaps-wavelets}.  
   First, we divide $\C\backslash\{0\}$ into $5$ segments   $$S_k=\big\{re^{i\varphi}:\ r>0,\ \pi(2k-1)/5\leq \varphi\leq \pi (2k+1)/5\big\},$$  and choose the points  $u_k=R/2\,  e^{2\pi i k/5},$ $k=0,1,..,5$. If $\mu(\mathbb{D}_R(z))=0$, then an  application of \eqref{eq:cov}, the rotational invariance of $H_n$  and Lemma~\ref{lem:aux-stft}   shows
    \begin{align*}
        A&=A\|\pi(z)h_0\|_2^2
        \\ 
        &\leq \hspace{-1pt}
        \int_{\C}\big|\langle \pi(z)h_0,\pi(w)h_n\rangle |^2d\mu(w)
       \\
         &  =  \hspace{-1pt}      \int_{\C }\big|\langle h_0,\pi( w\hspace{-1pt} -\hspace{-1pt} z)h_n\rangle |^2 d\mu(w)
          \\
         &  = \hspace{-1pt}  \sum_{k=1}^5     \int_{z+S_k}\hspace{-4pt} H_n(w\hspace{-1pt} -\hspace{-1pt} z,u_k) \big|\hspace{-.5pt}\langle \pi( u_k )h_0,\pi( w\hspace{-1pt} -\hspace{-1pt} z)h_n\rangle\hspace{-.5pt} \big|^2 d\mu(w)
       \\
          &\leq \hspace{-1pt}   \sum_{k=1}^5    \hspace{-1pt}\sup_{\gamma\in S_k\backslash \mathbb{D}_R}\hspace{-4pt} H_n(\gamma,u_k) \hspace{-2pt}\int_{\C}\hspace{-1pt}\big|\hspace{-.5pt}\langle \pi( z\hspace{-1pt} +\hspace{-1pt} u_k)h_0,\pi(w)h_n\rangle \hspace{-.5pt}\big|^2d\mu(w)
      \\
        &\leq  5  B  \sup_{\gamma\in S_0\backslash \mathbb{D}_R }\hspace{-2pt}H_n(\gamma,u_0) \leq  4^n5Be^{-\pi R^2/2}.
    \end{align*}  
    Solving for $R$ then completes the proof.\hfill $\Box$
\end{proof}
\medskip
\begin{remark}
\textit{Like in Section~\ref{sec:bergman}, the bounds of Theorem~\ref{thm:gaps-gabor}  can be directly translated to bounds for sampling measures on the Bargmann-Fock space of entire functions ($n=0$), and to (true) polyanalytic Bargmann-Fock spaces ($n\geq 1$), see \cite{abgroe12}.  }
\end{remark}

\section*{Acknowledgements}

\noindent 
This research was funded by the Austrian Science Fund (FWF)  10.55776/PAT1384824.

    \bibliographystyle{plain}
\bibliography{paperbib}
\end{document}